\newtheorem{theorem}{\bf Theorem}[section]
\newtheorem{proposition}{\bf Proposition}[section]
\newtheorem{definition}{\bf Definition}[section]
\newtheorem{remark}{\bf Remark}[section]
\newcommand{\bmat}{\left[ \begin{matrix}}
\newcommand{\emat}{\end{matrix} \right]}
\newcommand{\Ebb}{{\mathbb E}\,}
\newcommand{\bb}{\mathbf b}
\newcommand{\by}{\mathbf y}
\newcommand{\bx}{\mathbf x}
\newcommand{\bA}{\mathbf A}
\newcommand{\bC}{\mathbf C}
\newcommand{\bK}{\mathbf K}
\newcommand{\bI}{\mathbf I}
\newcommand{\bP}{\mathbf P}
\newcommand{\bV}{\mathbf V}
\newcommand{\bW}{\mathbf W}
\newcommand{\bX}{\mathbf X}
\newcommand{\bY}{\mathbf Y}
\newcommand{\bmu}{\boldsymbol{\mu}}
\newcommand{\bSigma}{\boldsymbol{\Sigma}}
\newcommand{\bGamma}{\boldsymbol{\Gamma}}
\newcommand{\cB}{\mathcal B}
\newcommand{\cC}{\mathcal C}
\newcommand{\cF}{\mathcal F}
\newcommand{\cG}{\mathcal G}
\newcommand{\cH}{\mathcal H}
\newcommand{\cK}{\mathcal K}
\newcommand{\cN}{\mathcal N}
\newcommand{\cO}{\mathcal O}
\newcommand{\cP}{\mathcal P}
\newcommand{\cS}{\mathcal S}
\newcommand{\cV}{\mathcal V}
\newcommand{\cX}{\mathcal X}
\newcommand{\cY}{\mathcal Y}
\newcommand{\R}{\mathbb R}
\newcommand{\X}{\mathbb X}
\newcommand{\Y}{\mathbb Y}
\newcommand{\Pbb}{\mathbb P}
\title{\LARGE \bf{On the Projective Geometry of Kalman Filter}}
\author{Francesca Paola Carli and Rodolphe Sepulchre
\thanks{Francesca Paola Carli is with the Department of Electrical Engineering and Computer Science, University of Li\`{e}ge, Belgium, 
and visiting the Department of Engineering, University of Cambridge, United Kingdom. She acknowledges support from the FNRS, Belgium.
        {\tt\small fpc23@cam.ac.uk }}%
\thanks{Rodolphe Sepulchre is with the Department of Engineering, University of Cambridge, United Kingdom, 
        {\tt\small r.sepulchre@eng.cam.ac.uk}}%
}
\begin{document}

\maketitle
\thispagestyle{empty}
\pagestyle{empty}

\begin{abstract}
Convergence of the Kalman filter is best analyzed by studying the contraction of the Riccati map in the space of positive definite (covariance) matrices. In this paper, we explore how this contraction property relates to a more fundamental non--expansiveness property of filtering maps in the space of probability distributions endowed with the Hilbert metric. 
This is viewed as a preliminary step towards improving the convergence analysis of filtering algorithms over general graphical models.
\end{abstract}

\section{Introduction}\label{sec:introduction}

This paper is about the asymptotic behavior of the Kalman filter \cite{KalmanBucy1961}.  
The Kalman--Bucy filter merges predictions from a trusted model of the dynamics of the system with incoming measurements 
in order to get an accurate, real--time estimate of the unknown internal state of the system. 
The estimation relies on the computation of a positive semidefinite matrix $\bP$, the covariance of the estimation error.  
The difference equation verified by $\bP$ is a discrete--time algebraic Riccati equation. 
Kalman showed that, for a linear time--invariant system, under detectability conditions, the Riccati equation converges to a fixed point, 
which is unique under certain stabilizability conditions (\cite{Kalman1963}, see also \cite{Jazwinski1970}). 
The classical convergence analysis requires several steps, 
showing that the error covariance is upper bounded, 
that, with zero initial value, it is monotone increasing, so that it admits a limit, 
and then proving that the corresponding filter is stable and that the limit is the same for all initial covariances. 

In \cite{Bougerol1993} Bougerol  proposed a more geometric  convergence analysis by  
showing that the discrete--time Riccati iteration is a contraction for the Riemannian metric associated to the cone of positive definite matrices.
Other authors elaborated along these lines (see e.g. \cite{LiveraniWojtkowski1994, Wojtkowski2007, LawsonLim2007, GaubertQu2014}), 
showing that the Riccati operator is a contraction with respect  to other metrics (e.g. Thompson's metric) 
and providing explicit formulas for the contraction coefficients. 

{In this paper, we 
seek to relate the convergence of the Kalman iteration, and, in particular, of the Riccati flow, 
to the contraction of the (projective)  Hilbert metric under the action of a nonlinear map on the space of positive 
{measurable} functions 
(as opposed to the action of the nonlinear Riccati operator on the space of positive definite matrices). 
The choice of Hilbert metric seems to be particularly sensible in this context since, thanks to its property of being invariant under scaling, 
it allows to  study the convergence of a nonlinear iteration via the analysis of a linear one. 
To this end, the Kalman iteration  is seen as a specialization for Gaussian distributions of filtering algorithms for general hidden Markov models (HMMs) 
and the 
observation is made that the underlying iteration of these general filtering algorithms 
never expands the Hilbert metric. } 
{
This approach is more general than the analysis of the Riccati iteration but at the price of a weaker result, since only non expansiveness of the Hilbert metric can be shown. 
The gap between non expansiveness and contraction is certainly a non trivial one in the infinite dimensional space of probability
distributions. 
{Using the Hilbert metric, convergence results have been proved in  \cite{AtarZeitouni1997}, \cite{LeGlandOudjane2004} (see also \cite{LeGlandMevel2000} for some results concerning HMMs with finite state space) 
where problems arising from non--compact state spaces or heavy tailed distributions have been considered. }
We envision that this approach can open the way to a geometric analysis of filtering algorithms on 
{general graphical models, e.g., of arbitrary topology. }
}

{The paper is organized as follows. Section \ref{sec:Hilbert_metric} and \ref{sec:Kalman} establish common notation by introducing the Hilbert metric and the Kalman filter iteration. }
In Section \ref{sec:contraction_filtering} we show that the nonlinear iteration underlying  filtering algorithms for general HMMs 
{does not expand} the Hilbert metric on the space of {positive measurable functions.   }
{In Section \ref{sec:contraction_Kaman} 
we show that the Kalman iteration can indeed be seen as a particularization for Gaussian distributions of forward filtering algorithms for general HMMs  
and as such does not expand the Hilbert metric on the space of {positive measurable functions} endowed with the Hilbert metric. 
Section \ref{sec:on_convergence_KF} discusses convergence.} 
Section \ref{sec:discussion} ends the paper.

\textbf{Notation.} Throughout the paper if $\cK$ is a cone, we denote by $\cK^+$ the interior of $\cK$.  
In particular we will denote by $\cP$ ($\cP^+$) the cone of positive semidefinite (definite) matrices while  
$\cF$ ($\cF^+$) will be used to denote the {cone of nonnegative (positive) measurable functions with respect to a suitable $\sigma$--algebra. }

\section{Hilbert metric}\label{sec:Hilbert_metric}

The Hilbert metric was introduced in \cite{Hilbert1895}. 
Birkhoff \cite{Birkhoff1957} (see also \cite{Bushell1973}) showed that strict positivity of a mapping implies contraction in the Hilbert metric,  
paving the way to many contraction--based results in the literature of positive operators. 
The Hilbert metric is defined as follows. 
Let $\cB$ be a real Banach space and let $\cK$ be a closed solid cone in $\cB$  that is a closed subset $\cK$ with the properties that 
(i) $\cK^+$ 
is non--empty;  
(ii) $\cK + \cK \subseteq \cK$; 
(iii) $\cK \cap - \cK = \left\{0\right\}$;  
(iv) $\lambda \cK \subset \cK$ for all $\lambda \geq 0$.  
Define the partial order
$$x\preceq y \Leftrightarrow y-x\in \cK\,,$$
and for $x, y \in \cK \backslash \left\{0\right\}$, let 
\begin{align*}
M(x, y) &:= \inf \left\{ \lambda | x -  \lambda y \preceq 0\right\}\\
m(x, y) &:= \sup \left\{\lambda |  x -  \lambda y \succeq 0 \right\}
\end{align*}
The Hilbert metric $d_{\cH}(\cdot, \cdot)$ induced by $\cK$ is defined by 
\begin{equation}\label{eqn:Hilbert_metric}
d_{\cH}\left(x,y\right) := \log\left(\frac{M(x,y)}{m(x,y)}\right), \; \;  x,y \in \cK \backslash \left\{0\right\}\,.
\end{equation}
For example, if $\cB = \R^n$ and the cone $\cK$ is the positive orthant, $\cK = \cO:=  \left\{ (x_1, \dots, x_n)\,:\, x_i \geq 0, \, 1 \leq i \leq n \right\}$, 
then $M(\bx,\by)=\max_{i}(x_i/y_j)$ and $m(\bx,\by)=\min_i(x_i/y_i)$ and the Hilbert metric can be expressed as 
$$
d_{\cH}(\bx,\by) = \log \frac{\max_i(x_i/y_i)}{\min_i{(x_i/y_i)}}
$$
On the other hand, if $\cB =\cS := \left\{ \bX = \bX^\top \in \R^{n \times n}\right\}$ is the set of symmetric matrices and $\cK = \cP:= \left\{ \bX \succeq 0 \mid \bX \in \cS\right\}$ is the cone of positive semidefinite matrices, then for $\bX,\bY \succ 0$, $M(\bX,\bY)=\lambda_{max}\left(\bX\bY^{-1}\right)$ and $m(\bX,\bY)= \lambda_{min}\left(\bX \bY^{-1}\right)$. Hence the Hilbert metric is 
$$
d_{\cH}(\bX,\bY) = \log \frac{\lambda_{max}\left(\bX \bY^{-1}\right)}{\lambda_{min}\left(\bX \bY^{-1}\right)}
$$
In the following, we will be interested to positive operators on finite measures. In this context, the Hilbert metric is defined as follows.  
Let  $\X$ be a complete separable metric space 
and let $\cX$ be the $\sigma$--algebra of Borel subsets of $\X$ . 
Moreover let $\cB = \cV$ be the vector space of finite signed measure on $(\X,\cX)$ and $\cK = \cC(\X)$  be the set of finite nonnegative measures on $\X$. 
We recall that two elements $\lambda, \mu \in \cC(\X)$ are called \emph{comparable} if  $\alpha \lambda \leq \mu \leq \beta \lambda$ for suitable positive scalars $\alpha, \beta$.  
The Hilbert metric on $\cC(\X) \backslash \left\{0\right\}$ is defined as  
$$
d_{\cH}(\mu,\mu') = 
\begin{cases}
\log \frac{ \sup_{A: \mu'(A)>0} \mu(A)/\mu'(A)}{ \inf_{A : \mu'(A)>0} \mu'(A)/\mu(A) } & \text{if} \,\mu, \mu' \, \text{comparable} \\
\infty & \text{otherwise.}
\end{cases}
$$

{ 
An important property of the Hilbert metric is the following. 
The Hilbert metric is a \emph{projective metric} on $\cK$ i.e. it is nonnegative, symmetric, it satisfies the triangle inequality and is such that, for every $x,y \in \cK$, 
$d_{\cH}(x,y)=0$ if and only if $x=\lambda y$ for  some $\lambda > 0$. It follows easily that $d_{\cH}(x,y)$ is constant on rays, that is 
\begin{equation}\label{eqn:Hilbert_metric_invariant_under_scaling}
d_{\cH}\left(\lambda x, \mu y\right) = d_{\cH}\left(x, y\right) \quad \text{for } \lambda, \mu > 0 \,. 
\end{equation}
}

\subsection*{Hilbert metric and positive mappings}

In this section, we review contraction properties of positive operators with respect to the Hilbert metric. 
{
We recall that a map $A: \cK \mapsto \cK$ is said to be \emph{positive}; 
a map $A: \cK^+ \mapsto \cK^+$ is said to be \emph{ strictly positive}.  
If $A$ is a strictly positive linear map we denote by 
\begin{equation}
k(A) := \inf \left\{  \lambda \, : d(Ax,Ay)  \leq \lambda d(x, y) \; \forall x,y, \in \cK^+ \right\}
\end{equation}
the contraction ratio of $A$ and by 
\begin{equation}
\Delta(A) := \sup \left\{ d(Ax,Ay)\, : \,  x,y, \in \cK^+ \right\}
\end{equation}
its projective diameter. 
Contraction properties of positive operators with respect to the Hilbert metric are established in the following theorem \cite{Birkhoff1957,Bushell1973,KohlbergPratt1982}. 
\begin{theorem}\label{prop:properties_Hilbert_metric} 
If  $x,y \in \cK$,  then the following holds 
\begin{itemize}
	\item[$(i)$] if $A$ is a positive linear map on $\cK$, then $d_{\cH}(Ax,Ay) \leq d_{\cH}(x,y)$,  
	i.e. the Hilbert metric contracts weakly under the action of a positive linear transformation. 	
	\item[$(ii)$] [Birkhoff, 1957] If $A$ is a strictly positive linear map in $\cB$, then  
	\begin{equation}
	k(A) = \rm{tanh} \frac14 \Delta(A)
	\end{equation}
\end{itemize}
\end{theorem}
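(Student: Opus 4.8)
The plan is to treat the two parts separately, since $(i)$ is elementary while $(ii)$ is the substantive Birkhoff estimate.

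For part $(i)$, I would start from the observation that the definitions of $M$ and $m$, together with the closedness of $\cK$, yield the sandwich $m(x,y)\,y \preceq x \preceq M(x,y)\,y$. A positive linear map is order preserving: if $u \preceq v$ then $v-u\in\cK$, so $A(v-u)=Av-Au\in\cK$ and hence $Au\preceq Av$. Applying $A$ to the sandwich gives $m(x,y)\,Ay \preceq Ax \preceq M(x,y)\,Ay$, which exhibits $m(x,y)$ as an admissible value in the supremum defining $m(Ax,Ay)$ and $M(x,y)$ as an admissible value in the infimum defining $M(Ax,Ay)$. Therefore $m(Ax,Ay)\geq m(x,y)$ and $M(Ax,Ay)\leq M(x,y)$, and taking the logarithm of the ratio gives $d_{\cH}(Ax,Ay)\leq d_{\cH}(x,y)$.

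For part $(ii)$ the plan is to reduce the contraction ratio to a scalar optimization. Writing $\alpha=m(x,y)$, $\beta=M(x,y)$, set $u:=x-\alpha y$ and $v:=\beta y-x$; by the sandwich above both lie in $\cK$, and one checks $x=(\beta u+\alpha v)/(\beta-\alpha)$, $y=(u+v)/(\beta-\alpha)$, with $d_{\cH}(x,y)=\log(\beta/\alpha)$. Put $p:=Au$, $q:=Av$; finiteness of $\Delta(A)$ ensures they are comparable with $d_{\cH}(p,q)\leq\Delta(A)$. Using the scaling invariance \eqref{eqn:Hilbert_metric_invariant_under_scaling}, $d_{\cH}(Ax,Ay)=d_{\cH}(\beta p+\alpha q,\;p+q)$, so both arguments are positive combinations of the single pair $p,q$. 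With $s:=m(p,q)$, $t:=M(p,q)$, so that $sq\preceq p\preceq tq$ and $\log(t/s)=d_{\cH}(p,q)\leq\Delta(A)$, I would then verify the two one-line cone inequalities $M(\beta p+\alpha q,p+q)\leq(\alpha+\beta t)/(1+t)$ and $m(\beta p+\alpha q,p+q)\geq(\alpha+\beta s)/(1+s)$, each of which reduces to $p\preceq tq$, respectively $p\succeq sq$. This produces the key bound
\begin{equation*}
d_{\cH}(Ax,Ay)\;\leq\;\log\frac{(\alpha+\beta t)(1+s)}{(\alpha+\beta s)(1+t)}.
\end{equation*}

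It remains to maximize the ratio of this bound to $\log(\beta/\alpha)$. Normalizing $\alpha=1$, $\beta=\gamma$ and writing $k:=t/s$, I would first show the quantity is monotone decreasing in $\gamma$, so its supremum over $\gamma>1$ is the limit $\gamma\to1^{+}$, which equals $s(k-1)/[(1+s)(1+sk)]$; maximizing over the scale $s$ via the arithmetic--geometric mean inequality (the denominator is minimized at $s=1/\sqrt{k}$, where it equals $(1+\sqrt{k})^{2}$) gives $(\sqrt{k}-1)/(\sqrt{k}+1)$. Since $k\leq e^{\Delta(A)}$, this is at most $(e^{\Delta(A)/2}-1)/(e^{\Delta(A)/2}+1)=\tanh(\tfrac14\Delta(A))$, establishing $k(A)\leq\tanh(\tfrac14\Delta(A))$. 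For the reverse inequality I would exhibit near-extremal $u,v$ (reducing to the positive orthant in $\R^2$) for which $d_{\cH}(p,q)$ approaches $\Delta(A)$ and the two cone inequalities are asymptotically tight, and then let $\gamma\to1^{+}$ with $s=1/\sqrt{k}$ so that the ratio tends to $\tanh(\tfrac14\Delta(A))$. The main obstacle is precisely this scalar optimization: showing that the supremum over $\gamma$ is attained in the limit $\gamma\to1^{+}$ (a monotonicity claim that holds but needs a careful sign check on the derivative, since the relevant function is not globally concave in $\log\gamma$), and constructing configurations that simultaneously realize the projective diameter and the extremal comparison directions so that the upper bound is matched.
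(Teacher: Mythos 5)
The paper itself offers no proof of this theorem: it is stated as a quotation from the literature (Birkhoff 1957, Bushell 1973, Kohlberg--Pratt 1982), so your proposal can only be judged against the classical argument, which it largely follows. Your part $(i)$ is correct and is exactly the standard argument (modulo the trivial remark that if $x,y$ are not comparable then $d_{\cH}(x,y)=\infty$ and there is nothing to prove). Your part $(ii)$ also sets up the right reduction: the decomposition $u=x-\alpha y$, $v=\beta y-x$, the identities $x=(\beta u+\alpha v)/(\beta-\alpha)$, $y=(u+v)/(\beta-\alpha)$, and the two cone estimates are correct; in fact they hold with \emph{equality}, $M(\beta p+\alpha q,\,p+q)=(\alpha+\beta t)/(1+t)$ and $m(\beta p+\alpha q,\,p+q)=(\alpha+\beta s)/(1+s)$, a point that is what makes your lower-bound sketch workable.

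The genuine gap is the step you yourself single out as the main obstacle: the monotonicity claim is false. With $\alpha=1$, $\beta=\gamma$, the quantity $R(\gamma):=\frac{1}{\log\gamma}\log\frac{(\gamma t+1)(1+s)}{(\gamma s+1)(1+t)}$ is \emph{not} monotone decreasing in $\gamma$ for fixed $s,t$. Counterexample: $s=0.1$, $t=0.4$ (so $k=4$) gives $R(1^{+})=(t-s)/((1+t)(1+s))\approx 0.195$, while $R(5)=\log(3.3/2.1)/\log 5\approx 0.281$. The reason is structural: $R(\gamma)=\frac{1}{\log\gamma}\int_{1}^{\gamma}g(u)\,\frac{du}{u}$ with $g(u)=\frac{u(t-s)}{(ut+1)(us+1)}$, and $g$ is unimodal with its maximum at $u^{*}=1/\sqrt{ts}$; whenever $ts<1$ (i.e.\ $s<1/\sqrt{k}$) one has $u^{*}>1$, so the average $R$ initially \emph{increases} above $R(1^{+})$. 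Hence ``for each fixed $s$ the supremum over $\gamma$ is attained as $\gamma\to1^{+}$'' is wrong; only the joint supremum over $(\gamma,s)$ sits at that corner, and your argument as written does not establish the bound for an arbitrary pair $x,y$. The repair is short and is the classical one: bound the integrand pointwise, $g(u)=\frac{t-s}{uts+t+s+1/u}\le\frac{t-s}{(\sqrt{t}+\sqrt{s})^{2}}=\frac{\sqrt{t}-\sqrt{s}}{\sqrt{t}+\sqrt{s}}=\tanh\bigl(\tfrac14\log\tfrac{t}{s}\bigr)\le\tanh\bigl(\tfrac14\Delta(A)\bigr)$ --- the very same AM--GM step you invoke, but applied inside the integral rather than after the illegitimate passage to $\gamma\to1^{+}$. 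This yields $R(\gamma)\le\tanh(\Delta(A)/4)$ for all $\gamma$ and $s$ simultaneously, i.e.\ $k(A)\le\tanh(\Delta(A)/4)$. Your sketch of the reverse inequality (near-extremal $u,v$, rescaling $v$ so that $st=1$, then $\gamma\to1^{+}$) is the right idea and becomes rigorous precisely because the two cone estimates are equalities, so no additional tightness argument is needed there.
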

\vspace{3mm}
Let $U$ denote the unit sphere in $\cB$ and let $E$ be the metric space $E:=\left\{ \cK^{+} \cup U, d_{\cH}\right\}$. 
Then, by combining Theorem \ref{prop:properties_Hilbert_metric} (ii), with the Banach contraction mapping theorem, the following generalization of the Perron--Frobenius  theorem holds: 
if $\Delta(A) < \infty$ and if the metric space $E$ is complete, then 
there exists a unique positive eigenvector of $A$ in $E$.

\section{Kalman filter and the Riccati operator}\label{sec:Kalman} 
 
In this section, we briefly introduce the Kalman filter iteration, that is analyzed later on in Section \ref{sec:contraction_Kaman} where an alternative derivation is also provided. 

Let us consider a linear dynamical system 
\begin{subequations}\label{eqn:LDS}
\begin{align}
\bX_{k+1}& = \bA\bX_k + \bW_k\, ,  \qquad k\geq 0 \label{eqn:state_eq}\\
\bY_k & = \bC\bX_k + \bV_k \,, \label{eqn:meas_eq}
\end{align}
\end{subequations}
where $\left\{\bW_k\right\}$ and $\left\{\bV_k\right\}$ are mutually uncorrelated white noise Gaussian processes with variance $\bGamma$ and  $\bSigma$, respectively, 
i.e. 
\begin{equation}\label{eqn:LDS_noises}
\bW_k  \sim \cN\left(0, \bGamma\right) 
\; \bV_k  \sim \cN\left(0, \bSigma\right),   
\end{equation}  
and with initial condition 
\begin{equation}\label{eqn:LDS_X0}
\bX_{0} \sim \cN(\bmu_{0},\bP_{0}) 
\end{equation}
such that 
\begin{equation}\label{eqn:LDS_covX0-noises}
\Ebb\left[ \bW_{k} \bX_{0}^{\top}\right] = 0, \qquad \Ebb\left[ \bV_{k} \bX_{0}^{\top}\right] = 0\,. 
\end{equation}
The Kalman filter recursion consists of the following steps:

\noindent \textbf{Time update (``Predict'')  step: }
\begin{align}
\hat \bX_{k|k-1} &=  \bA \hat \bX_{k-1|k-1}\label{eqn:kalman_predict_state}\\
\bP_{k|k-1} & = \bA \bP_{k-1|k-1} \bA^\top + \bGamma \label{eqn:kalman_predict_covariance}
\end{align}

\noindent \textbf{Measurement update (``Correct'') step: }
\begin{align}
\hat \bX_{k | k}& 
=  \hat \bX_{k|k-1} + \bK_k\left(\bY_k - \bC \hat \bX_{k|k-1} \right)  \label{eqn:kalman_update_state}\\
\bP_{k | k}& = \left(\bI-\bK_k\bC\right)\bP_{k|k-1}  \label{eqn:kalman_update_covariance}\\
\bK_k & =  \bP_{k|k-1} \bC^\top \left(\bC \bP_{k|k-1} \bC^\top + \bSigma\right)^{-1} \label{eqn:kalman_update_gain}
\end{align}
and is initialized at $\hat \bX_{0|-1} =  \bmu_0$, $\bP_{0|-1} = \bP_0$. 
Equivalently, the following one--step expression for the a posteriori state estimate and covariance holds 
\begin{align}
\bP_{k | k}&  = \Phi(\bP_{k-1|k-1})\\
\hat \bX_{k | k}& =  \left(\bA - \bP_{k | k}\bC^\top \bSigma^{-1} \bC \bA\right) \hat \bX_{k-1|k-1}\nonumber  \\
& \qquad \qquad \qquad \qquad  \qquad \qquad + \bP_{k|k }\bC^\top \bSigma^{-1} \bY_k
\end{align}
where $\Phi$ is the nonlinear map 
\begin{align}
\Phi(\bP)& = \left(\bA \bP \bA^\top + \bGamma\right) \nonumber \\
& \big[\bI + \bC^\top \bSigma^{-1} \bC \bGamma  + \bC^\top \bSigma^{-1} \bC\bA \bP \bA^\top\big]^{-1}\,.  \label{eqn:DARE_1}
\end{align}
$\Phi$ in \eqref{eqn:DARE_1} can be written as 
\begin{equation}\label{eqn:DARE_2}
\Phi (\bP) = \left( \left(\bA \bP \bA^\top + \bGamma\right)^{-1} +  \bC^\top \bSigma^{-1} \bC \right)^{-1}\,.
\end{equation}
This equation is called the \emph{discrete Riccati equation}.  
{
In the literature, convergence of the Kalman iteration has been studied by proving that the discrete Riccati operator contracts suitable metrics (e.g. the Riemannian metric \cite{Bougerol1993}, 
the Thompson's part metric \cite{LiveraniWojtkowski1994}) on the set of positive definite matrices. 
In the following, we propose to study convergence of the Kalman iteration by directly analyzing an equivalent iteration on the space of positive measurable functions. 
This equivalent iteration will be introduced and discussed in the following section.}

\section{
{Non--expansiveness of the Filtering Recursion in Projective Spaces}}
\label{sec:contraction_filtering}

In this section, we introduce the filtering algorithm for general hidden Markov models and we show that the map underlying the main iteration 
{does not expand} the Hilbert metric on the {cone of positive measurable functions. }
Note that some authors use the term hidden Markov model exclusively for the case where $\bX_k$ takes values in a finite state space.  
In this paper, following e.g. \cite{CappeMoulinesRyden2005}, 
when referring to a hidden Markov model we also intend to include models with continuous state space; 
such models are also referred to as state--space models in the literature.

\subsection*{Problem statement}

In the broadest sense of the word, a hidden Markov model is a Markov process that is split into two components: an observable component and an unobservable or ``hidden" component. 
That is, a hidden Markov model is a Markov process $\left\{\bX_k, \bY_k\right\}_{k \geq 0}$ on the state space $\X \times \Y$, 
where we presume that we have a way of observing $\bY_k$, but not $\bX_k$.

In simple cases such as discrete--time, countable state space models, it is common to define hidden Markov models by using the concept of conditional independence.  
It turns out that conditional independence is mathematically more difficult to define in general settings 
(in particular, when the state space $\X$ of the Markov process is not countable -- the case we are interested in), 
so a different route is adopted (see \cite{CappeMoulinesRyden2005} for details).    
To this aim, we 
define the transition kernel (the parallel of the transition matrix for countable state spaces).  
\begin{definition}\textbf{(Transition kernel)} 
A \emph{kernel} from a measurable space $(\X,\cX)$ to a measurable space $(\Y,\cY)$ 
is a map $Q : \X \times \cY \rightarrow [0,\infty]$ such that 
\\(i) for all $\bx \in \X$, $A \mapsto Q(\bx, A)$ is a measure on $\Y$; 
\\(ii) for all $A\in \cY$, the map  $\bx\mapsto Q(\bx,A)$ is measurable.
\\If $Q(\bx,\Y) = 1$ for every $\bx \in \X$, then $Q$ is called a \emph{transition kernel}. 
\end{definition}
We next consider an $\X$--valued stochastic process $\left\{\bX_k\right\}_{k \geq0}$, i.e., a collection of $\X$--valued random variables  
on a common underlying probability space $(\Omega,\cG,\Pbb)$, where $\X$ is some measure space. 
{
The process $\left\{\bX_k\right\}_{k \geq0}$ is \emph{Markov} if, for every time $k\geq 0$, there exists a transition kernel $Q_{k}: \X \times \cX \rightarrow [0,1]$  such that
$$
\Pbb(\bX_{k+1} \in A\mid \bX_0, .\dots ,\bX_k) = Q_{k}(\bX_k,A) \,,
$$
for every   $A \in \cX$, $k \geq 0$. If $Q_{k}=Q$ for every $k$, then the Markov process is called \emph{homogeneous}. 
For simplicity of exposition,  
from now on we will 
consider homogeneous Markov processes, though the theory we are about to develop does not rely on this assumption. }
A \emph{hidden Markov model} $\left\{\bX_k, \bY_k\right\}_{k \geq 0}$ is  a (only partially observed) Markov process, 
whose transition kernel has a special structure, namely it is such that both the joint process $\left\{\bX_k, \bY_k\right\}_{k \geq 0}$ 
and the marginal unobservable process $\left\{\bX_k\right\}_{k \geq 0}$ are Markov. 
Formally: 

\begin{definition} \textbf{(Hidden Markov Model)} Let $(\X,\cX)$ and $(\Y,\cY)$ be two measurable spaces and 
let $Q$ and $G$ denote a transition kernel on $(\X,\cX)$ and a transition kernel from $(\X,\cX)$ to $(\Y,\cY)$. 
Consider the transition kernel on the product space $(\X \times \Y, \cX \otimes \cY)$ defined by 
$$
T[(\bx,\by),C] = \iint_C Q(\bx,d\bx')G(\bx',d\by')\,. 
$$
for  $(\bx,\by) \in \X \times \Y, \,C \in \cX \otimes \cY$.  
The Markov process  $\left\{\bX_k,\bY_k\right\}_{k \geq 0}$ with transition kernel $T$ and initial  probability measure $\mu$ on $(\X,\cX)$, is called a \emph{hidden Markov model}. 
\end{definition} 
A hidden Markov model is completely determined by the initial measure $\mu$ and its transition kernel $T$ (equivalently by $Q$ and $G$),  
formally:  
\begin{proposition} 
Let $\left\{ \bX_k,\bY_k \right\}_{k \geq 0}$ be a hidden Markov model on $(\X \times \Y, \cX \otimes \cY)$ 
with transition kernel $Q$, observation kernel $G$, and initial measure $\mu$. Then
for every bounded measurable function $f : \X \times \Y \rightarrow \R$, 
\begin{align}\label{eqn:factorization_prop_HMM}
\Ebb [ &f(\bX_0, \bY_0,  \dots , \bX_k, \bY_k) ] \nonumber \\
 = \int & f(\bx_0, \by_0, . . . , \bx_k, \by_k) G(\bx_k, d\by_k) Q(\bx_{k-1}, d\bx_k) \dots  \nonumber\\
&G(\bx_1, d\by_1) Q(\bx_0, d\bx_1) G(\bx_0, d\by_0) \mu(d\bx_0).	
\end{align}
\end{proposition}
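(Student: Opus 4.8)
The plan is to prove the factorization \eqref{eqn:factorization_prop_HMM} by induction on $k$, at each step peeling off the most recent pair $(\bX_k,\bY_k)$ and exploiting the product structure of the transition kernel $T$. Before running the induction I would record the measure--theoretic fact that underpins every step: for any bounded measurable $h:\X\times\Y\to\R$, the map
\[
\bx \longmapsto \int h(\bx',\by')\,G(\bx',d\by')\,Q(\bx,d\bx')
\]
is again bounded and measurable. This follows from the defining properties of a kernel (measurability of $\bx\mapsto Q(\bx,A)$ and of $\bx'\mapsto G(\bx',B)$) together with the functional monotone class theorem: one checks it first for indicators $\mathds{1}_{A\times B}$ of measurable rectangles, extends to $\mathds{1}_C$ for $C\in\cX\otimes\cY$, then to simple functions, and finally to all bounded measurable $h$ by monotone approximation. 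Denote the resulting bounded measurable function by $(Q\!\ast\!G)h$.

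For the base case $k=0$, the joint law of $(\bX_0,\bY_0)$ is $\mu(d\bx_0)\,G(\bx_0,d\by_0)$: indeed $\bX_0\sim\mu$ and, by the role of the observation kernel $G$ in the definition of $T$, the conditional law of $\bY_0$ given $\bX_0=\bx_0$ is $G(\bx_0,\cdot)$. Thus $\Ebb[f(\bX_0,\bY_0)]=\int f(\bx_0,\by_0)\,G(\bx_0,d\by_0)\,\mu(d\bx_0)$, which is exactly \eqref{eqn:factorization_prop_HMM} at $k=0$.

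For the inductive step, assume \eqref{eqn:factorization_prop_HMM} holds at $k-1$ and let $f$ be bounded measurable. Writing $\cH_{k-1}:=\sigma(\bX_0,\bY_0,\dots,\bX_{k-1},\bY_{k-1})$ and applying the tower property, $\Ebb[f]=\Ebb\big[\Ebb[f\mid\cH_{k-1}]\big]$. Since $\{(\bX_j,\bY_j)\}_{j\geq0}$ is Markov with transition kernel $T$, the inner conditional expectation equals
\[
\int f(\bX_0,\dots,\bY_{k-1},\bx_k,\by_k)\,T[(\bX_{k-1},\bY_{k-1}),d(\bx_k,\by_k)].
\]
Substituting $T[(\bx,\by),C]=\iint_C Q(\bx,d\bx')\,G(\bx',d\by')$ turns this into $(Q\!\ast\!G)\,\tilde f$ evaluated at $\bX_{k-1}$, where $\tilde f:=f(\bX_0,\dots,\bY_{k-1},\cdot,\cdot)$; by the preliminary fact this is a bounded measurable function of $(\bX_0,\dots,\bY_{k-1})$ alone. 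The essential point is that $T$ does not depend on $\by$, so the two innermost kernels $Q(\bx_{k-1},d\bx_k)$ and $G(\bx_k,d\by_k)$ slot in exactly as in the statement. Applying the induction hypothesis to this bounded measurable function of the first $k-1$ steps and then re--expanding $(Q\!\ast\!G)$ reproduces the full nested integral, completing the induction.

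The algebraic telescoping of the kernels is immediate once these tools are in place, so the real work is measure--theoretic. The point demanding the most care --- and where I would expect to concentrate the effort --- is the monotone class machinery: it is needed both to establish the preliminary fact for arbitrary bounded measurable integrands (not merely rectangles) and to pass from the set--wise Markov property $\Pbb((\bX_k,\bY_k)\in C\mid\cH_{k-1})=T[(\bX_{k-1},\bY_{k-1}),C]$ to the function--wise identity for $\Ebb[f\mid\cH_{k-1}]$. Existence of these regular versions of the conditional distributions in turn rests on the underlying spaces being regular enough (Borel spaces), which is the standing assumption here.
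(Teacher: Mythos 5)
Your proof is correct, but there is nothing in the paper to compare it against: the paper states this proposition without any proof, presenting it as a known characterization of hidden Markov models and deferring to the cited monograph of Capp\'e, Moulines and Ryd\'en for details. What you have written is essentially the standard textbook argument: induction on $k$, the tower property together with the Markov property of the joint chain $\left\{(\bX_k,\bY_k)\right\}$ with kernel $T$, substitution of the product form $T[(\bx,\by),C]=\iint_C Q(\bx,d\bx')G(\bx',d\by')$, and monotone class machinery to pass from set--wise to function--wise identities. You also correctly identify the two points that the paper's terse definitions leave implicit: that the initial joint law must be read as $\mu(d\bx_0)G(\bx_0,d\by_0)$ (this is precisely the $k=0$ case, and is the intended convention, since the paper's definition only specifies $\mu$ on $(\X,\cX)$), and that the independence of $T[(\bx,\by),\cdot]$ from $\by$ is what makes the kernels telescope without any dependence on past observations.

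Two minor quibbles, neither fatal. First, your preliminary fact is stated for $h:\X\times\Y\to\R$, but in the inductive step you apply it to $\tilde f=f(\bx_0,\dots,\by_{k-1},\cdot,\cdot)$, whose integral against $G(\bx_k,d\by_k)Q(\bx_{k-1},d\bx_k)$ must be shown jointly measurable in \emph{all} of the frozen variables $(\bx_0,\dots,\by_{k-1})$ together with $\bx_{k-1}$, not merely in $\bx_{k-1}$; the same monotone class argument handles this, but in its parametrized form, so the statement you prove up front should be phrased for bounded measurable functions on $(\X\times\Y)^{k}\times\X\times\Y$. Second, your closing remark about regular conditional distributions and Borel spaces is superfluous: no existence theorem for conditional laws is invoked anywhere, because the kernel $T$ is part of the \emph{data} defining the hidden Markov model, and the Markov property in the paper is stated directly in terms of it.
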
 
\vspace{2mm}
In the following, we are interested in the  \emph{filtering problem} for HMM, namely the problem of computing the sequence of conditional distribution of $\bX_{k}$ given $\bY_{0:k}$. 
The filtering, as well as the related smoothing and prediction problems, have their origin in the work of Wiener, who was interested in stationary processes. 
In the more general setting of hidden Markov models, early contributions are the works of 
Stratonovich, Shiryaev, Baum, Petrie and coworkers \cite{Stratonovich1960,Shiryaev1966, BaumPetrieSoulesWeiss1970}, see also \cite{CappeMoulinesRyden2005} for a recent monograph.

\subsection*{Filtering algorithm} 

Assume that both $G$ and $Q$ are absolutely continuous with respect to the Lebesgue measure 
(in the next section we will particularize to the case of Gaussian distributions)  
with transition density functions $g$ and $q$ respectively. 
In terms of transition densities, the filtering problem can be solved as follows.   

\begin{theorem}[\textbf{Forward filtering recursion}]\label{thm:normalized_filtering_recursion}
We denote by $\hat{\alpha}_k(\bx_k)$ the probability density  function 
$$
\hat{\alpha}_s(\bx_k)  := p(\bx_k|\by_{0:s}) \\
$$
and let 
$$
g(\bx_k,\by_k)  = g_k(\bx_k)\,. 
$$
Then $\hat \alpha_k(\bx_k) = p(\bx_k\mid \by_{0:k})$ can be recursively expressed in terms of $\hat \alpha_{k-1}(\bx_{k-1}) = p(\bx_{k-1}\mid \by_{0:k-1})$ as follows 
\begin{equation}\label{eqn:recursion_alpha_hat}
\hat{\alpha}_k(\bx_{k})   = \frac{g_k(\bx_k) \int q(\bx_{k-1},\bx_k)  \hat{\alpha}_{k-1}(\bx_{k-1})d{\bx_{k-1}}}{\iint g_k(\bx_{k}) q(\bx_{k-1},\bx_k)  \hat{\alpha}_{k-1}(\bx_{k-1})d{\bx_k} d{\bx_{k-1}}}\,  
\end{equation}
with iteration initialized at 
\begin{equation}\label{eqn:initialization_filtering_alpha_hat}
\hat \alpha_0(\bx_0) 
= \frac{g_0(\bx_0) \mu(\bx_0)}{\int g_0(\bx_0) \mu(\bx_0)d\bx_0}. 
\end{equation}
\end{theorem}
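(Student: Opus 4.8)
The plan is to derive the recursion in two stages --- a prediction step and a Bayesian update step --- both of which can be read directly off the joint factorization \eqref{eqn:factorization_prop_HMM} specialized to the case where $Q$ and $G$ admit densities $q$ and $g$. Under the absolute continuity assumption, \eqref{eqn:factorization_prop_HMM} says that the joint density of $(\bx_{0:k},\by_{0:k})$ is
$$
p(\bx_0,\by_0,\dots,\bx_k,\by_k) = \mu(\bx_0)\,g_0(\bx_0)\prod_{i=1}^{k} q(\bx_{i-1},\bx_i)\,g_i(\bx_i),
$$
so every conditional independence relation I need is encoded in this product form and does not have to be postulated separately.

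First I would dispose of the initialization. At $k=0$ the joint density collapses to $\mu(\bx_0)g_0(\bx_0)$, and Bayes' rule immediately gives $\hat\alpha_0(\bx_0)=p(\bx_0\mid\by_0)=\mu(\bx_0)g_0(\bx_0)\big/\int \mu(\bx_0)g_0(\bx_0)\,d\bx_0$, which is exactly \eqref{eqn:initialization_filtering_alpha_hat}.

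For the inductive step I would argue as follows. \emph{Prediction:} from the product form, $p(\bx_k\mid\bx_{k-1},\by_{0:k-1})=q(\bx_{k-1},\bx_k)$, i.e.\ given $\bx_{k-1}$ the next state is independent of past observations, so marginalizing over $\bx_{k-1}$ yields the one-step predictor
$$
\alpha_k(\bx_k):=p(\bx_k\mid\by_{0:k-1})=\int q(\bx_{k-1},\bx_k)\,\hat\alpha_{k-1}(\bx_{k-1})\,d\bx_{k-1}.
$$
\emph{Update:} again from the product form, $p(\by_k\mid\bx_k,\by_{0:k-1})=g_k(\bx_k)$, so Bayes' rule applied to the new observation gives
$$
\hat\alpha_k(\bx_k)=p(\bx_k\mid\by_{0:k})=\frac{g_k(\bx_k)\,\alpha_k(\bx_k)}{\int g_k(\bx_k)\,\alpha_k(\bx_k)\,d\bx_k}.
$$
Substituting the predictor and noting that the normalizing denominator $p(\by_k\mid\by_{0:k-1})$ is precisely the double integral $\iint g_k(\bx_k) q(\bx_{k-1},\bx_k)\hat\alpha_{k-1}(\bx_{k-1})\,d\bx_k\,d\bx_{k-1}$ produces \eqref{eqn:recursion_alpha_hat}.

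The main obstacle is the rigorous justification of the two conditional-independence identities in the general, non-countable state-space setting, where conditional densities must be handled through disintegration rather than elementary conditioning. I would sidestep this entirely by staying at the level of the joint density supplied by \eqref{eqn:factorization_prop_HMM}: marginalize out $\bx_{0:k-1}$ to obtain $p(\bx_k,\by_{0:k})$, divide by $p(\by_{0:k})$, and then recognize the recursion by pulling the last factors $g_k(\bx_k)q(\bx_{k-1},\bx_k)$ outside the integral over $\bx_{k-1}$. This reduces the whole argument to algebraic manipulation of the product in \eqref{eqn:factorization_prop_HMM} together with Fubini's theorem, and avoids any ad hoc conditioning.
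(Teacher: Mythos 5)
Your proof is correct. The paper itself states this theorem without proof, treating it as a standard result from the HMM literature (cf.\ the reference to \cite{CappeMoulinesRyden2005}); the only trace of an argument in the paper is the Remark recording the two-step prediction/update split, which is precisely the structure of your derivation, and your final paragraph's fallback --- marginalizing the product density from \eqref{eqn:factorization_prop_HMM} over $\bx_{0:k-1}$, normalizing by $p(\by_{0:k})$, and invoking Tonelli to pull $g_k(\bx_k)q(\bx_{k-1},\bx_k)$ out of the inner integrals --- is the right way to make the conditional-independence steps rigorous in the uncountable state-space setting.
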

\vspace{2mm}
{The iteration \eqref{eqn:recursion_alpha_hat} defines a  
time--varying 
dynamical system 
over the {cone $\cF$ of nonnegative measurable functions  with respect to the product $\sigma$--algebra $\cX \otimes \cY^{\otimes(k+1)}$. } 
The following equivalent two--step formulation holds. 
\begin{remark}\textbf{[Two--step formulation of the filtering recursion]} 
The filtering recursion \eqref{eqn:recursion_alpha_hat} is often split into two steps.  
\begin{enumerate}
	\item \textbf{prediction step:} in which the one-step-ahead predictive density is computed 
   \begin{equation}\label{eqn:forward_algo_SSM_predict_step}
   \hat{\alpha}_{k-1}(\bx_k) 
  =   \int q(\bx_{k-1},\bx_k ) \hat{\alpha}_{k-1}(\bx_{k-1})  d{\bx_{k-1}} 
  \end{equation}
  \item \textbf{update step:} in which the observed data from time $k$ is absorbed yielding to the filtering density 
  \begin{equation}\label{eqn:forward_algo_SSM_update_step}
   \hspace{-2mm} \hat{\alpha}_k(\bx_k) 
    =  \frac{g_k(\bx_k) \hat{\alpha}_{k-1}(\bx_k)}{\iint  g_k(\bx_{k}) q(\bx_{k-1},\bx_k)  \hat{\alpha}_{k-1}(\bx_{k-1})d{\bx_k} d{\bx_{k-1}}} 
  \end{equation}
  \end{enumerate}
\end{remark}

\subsection*{{Non--expansiveness in projective space} }

{First of all, notice that the nonlinear map in \eqref{eqn:recursion_alpha_hat}, say $\bar{\Psi}_{k}$, is the composition of a linear one (at the numerator) and a positive scaling, i.e. we can write
\begin{equation*}
(\bar{\Psi}_k  f)(\bx) = \frac{(\Psi_k  f)(\bx)}{\int (\bar{\Psi}_k  f)(\bx) d\bx}
\end{equation*}
where 
\begin{equation}\label{eqn:Psi_k}
(\Psi_k  f)(\bx) = g_k(\bx) \int q(\bx',\bx)  f(\bx')d{\bx'} 
\end{equation}
with $q$ and $g$ transition densities associated to the transition and observation kernels $Q$ and $G$, respectively. 
The next theorem draws the consequences of the fact that the map  $\Psi_{k}$ takes nonnegative measurable functions into nonnegative measurable functions.  }

\begin{theorem}\label{thm:contraction_filtering_HMM} 
{The map $\Psi_{k}$ in \eqref{eqn:Psi_k} does not expand the Hilbert metric, i.e. $$d_{\cH}((\Psi_{k} f) (\bx), (\Psi_{k} g)(\bx)) \leq d_{\cH}(f(\bx),g(\bx))\,.$$}
\end{theorem}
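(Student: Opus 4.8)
The plan is to exhibit $\Psi_k$ as a composition of two positive linear maps on the cone $\cF$ of nonnegative measurable functions and then invoke part $(i)$ of Theorem \ref{prop:properties_Hilbert_metric}, which guarantees that any positive linear map is non--expansive in the Hilbert metric. Concretely, I would write
\begin{equation*}
\Psi_k = M_{g_k} \circ L_q,
\end{equation*}
where $L_q$ is the linear integral operator $(L_q f)(\bx) = \int q(\bx',\bx)\, f(\bx')\, d\bx'$ and $M_{g_k}$ is the multiplication operator $(M_{g_k} h)(\bx) = g_k(\bx)\, h(\bx)$.

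First I would check that each factor is linear and maps $\cF$ into $\cF$. Linearity of $L_q$ and $M_{g_k}$ in the argument function is immediate. Since $q$ is a transition density it is nonnegative, so $L_q$ sends a nonnegative $f$ to a nonnegative function of $\bx$; likewise $g_k \geq 0$ (it is the observation density evaluated at the current measurement), so $M_{g_k}$ preserves nonnegativity. Hence both $L_q$ and $M_{g_k}$ are positive linear maps on $\cF$, and Theorem \ref{prop:properties_Hilbert_metric}$(i)$ applies to each:
\begin{align*}
d_{\cH}(L_q f, L_q g) &\leq d_{\cH}(f,g), \\
d_{\cH}(M_{g_k} h, M_{g_k} h') &\leq d_{\cH}(h,h').
\end{align*}

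The last step is to chain these two inequalities, using the elementary fact that a composition of non--expansive maps is non--expansive. Setting $h = L_q f$ and $h' = L_q g$ gives
\begin{equation*}
d_{\cH}(\Psi_k f, \Psi_k g) = d_{\cH}(M_{g_k} L_q f, M_{g_k} L_q g) \leq d_{\cH}(L_q f, L_q g) \leq d_{\cH}(f,g),
\end{equation*}
which is the claim.

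I expect the only genuine subtlety, rather than a real obstacle, to be the bookkeeping around the cone itself: one must ensure that $L_q$ and $M_{g_k}$ are both viewed as self--maps of the same cone $\cF$ so that Theorem \ref{prop:properties_Hilbert_metric}$(i)$ literally applies, and in particular that the change of integration variable in $L_q$ does not take us outside the class of nonnegative measurable functions on which $d_{\cH}$ is defined. Once that is granted, the result is essentially a corollary of the weak contraction property of positive linear maps. It is worth noting that the argument is insensitive to the subsequent normalization in $\bar\Psi_k$, since by \eqref{eqn:Hilbert_metric_invariant_under_scaling} the Hilbert metric is constant along rays, so the non--expansiveness established for $\Psi_k$ transfers verbatim to the normalized filtering map.
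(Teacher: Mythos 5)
Your proof is correct and follows essentially the same route as the paper: you decompose $\Psi_k$ into the same two factors (the integral operator against $q$ and multiplication by $g_k$), verify each is a positive linear map, apply Theorem \ref{prop:properties_Hilbert_metric}$(i)$ to each, and conclude by composition of non--expansive maps. Your closing remark about the normalization in $\bar\Psi_k$ being harmless by the projective invariance \eqref{eqn:Hilbert_metric_invariant_under_scaling} is a nice addition that the paper defers to its later discussion, but the core argument is identical.
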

\vspace{2mm}
\begin{proof} 
{The map $\Psi_{k}$ is the composition of  
(i) $(\Psi^{(1)} f)(\bx) = \int q(\bx',\bx)  f(\bx')d{\bx'} $ and 
(ii) $(\Psi^{(2)} f)(\bx)= g_k(\bx) f(\bx)$. 
The maps $\Psi^{(1)}$ and $\Psi^{(2)}$ are positive linear and as such they do not expand the Hilbert metric (see Theorem \ref{prop:properties_Hilbert_metric}, (i)). 
The thesis follows since the composition of nonexpansive operators is nonexpansive. }
\end{proof}

\section{Kalman filtering as Forward Filtering Recursion}\label{sec:contraction_Kaman} 

The classical derivation of Kalman filter relies on an argument based on projections onto spaces spanned by random variables. 
As an alternative, the Kalman iteration can be seen as a  specialization of the filtering algorithm in Theorem \ref{thm:normalized_filtering_recursion} for Gaussian distributions. 
This fact by itself is known in the literature (see e.g. \cite{CappeMoulinesRyden2005}).  
{In this section, first we briefly review this
alternative derivation of Kalman filtering. 
This, combined with the (weak) contraction result of Theorem \ref{thm:contraction_filtering_HMM}, let us conclude that the Kalman iteration does not expand the Hilbert metric. 
Convergence of the Kalman iteration is discussed in Section \ref{sec:on_convergence_KF}.}

Before getting started, we observe that the linear dynamical system  \eqref{eqn:LDS}--\eqref{eqn:LDS_covX0-noises} 
is indeed equivalent to a hidden Markov model as specified by \eqref{eqn:factorization_prop_HMM} 
with initial, transition and emission probability densities, for $k \geq 0$, given by 
\begin{align}\label{eqn:initial_kransition_emission_prob_LSD}
p(\bx_0) & = \cN\left(\bmu_0,\bP_0\right) , \\
p(\bx_{k+1} \mid \bx_{k})  &= \cN\left(\bA \bx_{k},\bGamma\right),\\
p(\by_k \mid \bx_{k})&  = \cN\left(\bC \bx_{k},\bSigma\right), 
\end{align} 
Also we recall that given the prior and likelihood 
\begin{align}
p(\bx) & = \cN(\bmu_X, \bSigma_{X}) \\
p(\by \mid \bx) & = \cN(\bA \bx + \bb, \bSigma_{Y|X}) 
\end{align}
the posterior $p(\bx \mid \by)$ and normalization constant $p(\by)$ are given by 
\begin{align}
p(\by) &  = \cN(\bA \bmu_X + \bb,\bSigma_{Y \mid X} + \bA \bSigma_X \bA^\top) \label{eqn:p_y} \\
p(\bx \mid \by) & = \cN\left( \bmu_{X\mid Y}, \bSigma_{X \mid Y}\right)
\label{eqn:p_x_given_y}
\end{align}
with 
\begin{align}\label{eqn:posterior_mean_and_covariance}
 \bSigma_{X \mid Y} & = \bSigma_X+\bA^\top \bSigma_{Y|X}^{-1}\bA \\
\bmu_{X\mid Y} &=  \bSigma_{X \mid Y} \left[\bA^\top \bSigma_{Y \mid X}^{-1}(\by-\bb) + \bSigma_X^{-1}\bmu_X\right]\,. 
\end{align}

The next proposition connects the Kalman filter algorithm to the filtering recursion described in Section \ref{sec:contraction_filtering}. 

\begin{proposition}\label{prop:Kalman_as_ForwardAlgorithm}
The Kalman filter recursion \eqref{eqn:kalman_predict_state}--\eqref{eqn:kalman_update_gain} 
is a specialization of the forward filtering recursion of Theorem \ref{thm:normalized_filtering_recursion} 
for an HMM with Gaussian initial, transition and emission probabilities 
as in \eqref{eqn:initial_kransition_emission_prob_LSD}. 
\end{proposition}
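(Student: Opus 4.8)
The plan is to proceed by induction on $k$, exploiting the fact that the family of Gaussian densities is closed under both steps of the two--step filtering recursion, so that the infinite--dimensional iteration \eqref{eqn:recursion_alpha_hat} on $\cF$ collapses to a finite--dimensional recursion on the mean--covariance pair $(\hat\bX_{k|k},\bP_{k|k})$. The inductive hypothesis is that $\hat\alpha_{k-1}(\bx_{k-1}) = p(\bx_{k-1}\mid\by_{0:k-1})$ equals the Gaussian density $\cN(\hat\bX_{k-1|k-1},\bP_{k-1|k-1})$; the base case follows by specializing the initialization \eqref{eqn:initialization_filtering_alpha_hat} with $\mu=\cN(\bmu_0,\bP_0)$ and the Gaussian emission $g_0$, which is just the update step applied to the prior.

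First I would treat the prediction step \eqref{eqn:forward_algo_SSM_predict_step}. Substituting the Gaussian transition density $q(\bx_{k-1},\bx_k)=\cN(\bA\bx_{k-1},\bGamma)$ and the Gaussian $\hat\alpha_{k-1}$, the integral is a convolution of Gaussians and is therefore again Gaussian. Evaluating it yields $\cN(\bA\hat\bX_{k-1|k-1},\,\bA\bP_{k-1|k-1}\bA^\top+\bGamma)$, i.e. exactly the Kalman time--update equations \eqref{eqn:kalman_predict_state}--\eqref{eqn:kalman_predict_covariance} upon identifying $\hat\bX_{k|k-1}=\bA\hat\bX_{k-1|k-1}$ and $\bP_{k|k-1}=\bA\bP_{k-1|k-1}\bA^\top+\bGamma$.

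Next I would treat the update step \eqref{eqn:forward_algo_SSM_update_step}, which multiplies the predictive Gaussian by the emission likelihood $g_k(\bx_k)=\cN(\bC\bx_k,\bSigma)$ and renormalizes. This is precisely the Gaussian conjugate update recalled in \eqref{eqn:p_y}--\eqref{eqn:posterior_mean_and_covariance}: taking the prior to be the predictive density ($\bmu_X=\hat\bX_{k|k-1}$, $\bSigma_X=\bP_{k|k-1}$) and the likelihood with $\bA\mapsto\bC$, $\bb\mapsto 0$, $\bSigma_{Y|X}\mapsto\bSigma$, the posterior is Gaussian with information--form parameters $\bP_{k|k}^{-1}=\bP_{k|k-1}^{-1}+\bC^\top\bSigma^{-1}\bC$ and $\hat\bX_{k|k}=\bP_{k|k}[\bC^\top\bSigma^{-1}\by_k+\bP_{k|k-1}^{-1}\hat\bX_{k|k-1}]$. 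The normalization constant $\int\!\!\int g_k q\,\hat\alpha_{k-1}$ is the Gaussian predictive likelihood \eqref{eqn:p_y} and only enters as a scalar factor, so the normalized posterior is again Gaussian, closing the induction.

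It remains only to reconcile this information form with the gain form \eqref{eqn:kalman_update_state}--\eqref{eqn:kalman_update_gain}, and I expect this to be the one genuinely computational step, although it is standard rather than conceptual. Applying the matrix inversion lemma to $(\bP_{k|k-1}^{-1}+\bC^\top\bSigma^{-1}\bC)^{-1}$ gives $\bP_{k|k}=(\bI-\bK_k\bC)\bP_{k|k-1}$ with $\bK_k=\bP_{k|k-1}\bC^\top(\bC\bP_{k|k-1}\bC^\top+\bSigma)^{-1}$, recovering \eqref{eqn:kalman_update_covariance}--\eqref{eqn:kalman_update_gain}; substituting this into the mean expression then reproduces \eqref{eqn:kalman_update_state}. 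Since the obstacle here is purely algebraic bookkeeping through the Woodbury identity, I would confine these manipulations to a short remark and otherwise simply invoke the known equivalence between the information and covariance parametrizations of the Kalman filter.
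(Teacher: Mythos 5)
Your proposal is correct and follows essentially the same route as the paper's own proof: the prediction step is handled by the Gaussian marginalization identity \eqref{eqn:p_y}, the update step by the Gaussian conjugate posterior formulas \eqref{eqn:p_x_given_y}--\eqref{eqn:posterior_mean_and_covariance} yielding the information form \eqref{eqn:Ptt}--\eqref{eqn:mutt}, and the matrix inversion lemma then recovers the gain form \eqref{eqn:kalman_update_state}--\eqref{eqn:kalman_update_gain}. The only difference is presentational: you make the induction and the base case \eqref{eqn:initialization_filtering_alpha_hat} explicit, whereas the paper leaves the closure of the Gaussian family under the recursion implicit.
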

\begin{proof}
Let 
\begin{align*}
\bmu_{k\mid s} &:= \Ebb \left[\bX_k \mid \bY_{0:s}\right], \\
\bP_{k \mid s} &: = \Ebb \left[(\bX_k-\bmu_{k \mid s})(\bX_k-\bmu_{k\mid s})^\top \mid\bY_{0:s} \right]
\end{align*}
\begin{enumerate}
	\item \textbf{prediction step:} By \eqref{eqn:forward_algo_SSM_predict_step}, $p(\bx_k|\by_{0:k-1})$ is given by 
  $$ 
  p(\bx_k |\by_{0:k-1}) =  \int_{\bx_{k-1}} {p(\bx_k |\bx_{k-1} )}  
	{p(\bx_{k-1} | \by_{0:k-1})} 
	d\bx_{k-1} 
  $$
	Now, $p(\bx_k |\bx_{k-1} )$ is Gaussian with mean $\bA \bx_{k-1}$ and covariance $\bGamma$.   
$p(\bx_{k-1} | \by_{0:k-1})$ is also Gaussian. We denote by $\bmu_{k-1\mid k-1}$  and $\bP_{k-1\mid k-1}$ its mean and covariance, respectively. 
By virtue of \eqref{eqn:p_y} we get 
$$
{p(\bx_k |\by_{0:k-1})} \sim \cN(\bA \bmu_{k-1|k-1}, \bA \bP_{k-1|k-1} \bA^\top + \bGamma)
$$
i.e. 
\begin{align*}
\bmu_{k\mid k-1} & = \bA \bmu_{k-1|k-1}\\
\bP_{k\mid k-1} & = \bA \bP_{k-1|k-1} \bA^\top + \bGamma
\end{align*}
which are the a priori state estimate and covariance in \eqref{eqn:kalman_predict_state}--\eqref{eqn:kalman_predict_covariance}.   \item \textbf{update step:} By \eqref{eqn:forward_algo_SSM_update_step}, $p(\bx_k |\by_{0:k})$ is given by 
	$$
 {p(\bx_k |\by_{0:k})} = 
	\frac{{p(\by_k \mid \bx_k )} 
	{{p(\bx_k |\by_{0:k-1})}} 
	}	{p(\by_k \mid \by_{0:k-1})}
  $$
	Now $p(\by_k \mid \bx_k )$ is Gaussian with mean $\bC \bx_k$ and covariance $\bSigma$.  
	$p(\bx_k |\by_{0:k-1})$ is also Gaussian. We denote by $\bmu_{k \mid k-1}$ and $\bP_{k \mid k-1}$ its mean and covariance.  
	By virtue of \eqref{eqn:p_x_given_y} we get 
	$$
	{p(\bx_k |\by_{0:k})} \sim \cN \left(\bmu_{k\mid k}, \bP_{k\mid k} \right)
	$$
	with 
	\begin{align}
	\bP_{k\mid k} & = \left(\bP_{k \mid k-1}^{-1} + \bC^\top \bSigma^{-1} \bC\right)^{-1} \label{eqn:Ptt}\\
	\bmu_{k\mid k}& = \bP_{k\mid k} \left[\bC^\top \bSigma^{-1} \by_k + \bP_{k \mid k-1}^{-1} \bmu_{k\mid k-1}\right] \label{eqn:mutt}
	\end{align}
	from which the expressions \eqref{eqn:kalman_update_state}--\eqref{eqn:kalman_update_covariance} for the a posteriori state estimate and covariance 
	can be recovered via  the matrix inversion lemma.
	\end{enumerate}
\end{proof}

By the results in Theorem \ref{thm:contraction_filtering_HMM} and Proposition \ref{prop:Kalman_as_ForwardAlgorithm}, 
we have that the map underlying the Kalman filtering algorithm does not expand the Hilbert metric on 
{space of positive measurable functions.}

\section{On strict contractiveness of the Kalman iteration}\label{sec:on_convergence_KF}

So far, we have shown that the time--varying nonlinear operator $\bar{\Psi}_{k}$ that underlies the Kalman iteration does not expand the Hilbert metric. 
Proving convergence of the Kalman iteration indeed amounts to prove that such iteration \emph{strictly} contracts the Hilbert metric. 
{As observed in Section \ref{sec:contraction_filtering},} 
the map \eqref{eqn:recursion_alpha_hat} is the composition of a linear positive map and a positive scaling. 
By the scaling invariant property of the Hilbert metric, it follows that 
convergence analysis can concentrate only on the linear numerator of $\bar{\Psi}_{k}$. 
By Theorem \ref{prop:properties_Hilbert_metric} (ii), a sufficient condition for a strictly positive linear operator to be a contraction 
is to have a finite projective diameter. 
At this point, one may observe that even the Hilbert distance between two Gaussians with the same variance and different mean may tend to infinity
{(a general discussion that takes into account problems arising from the use of the Hilbert metric with non--compact state space and heavy tailed distributions is contained in \cite{AtarZeitouni1997}}). 
Proving strict contraction usually requires to exploit that the map $\bar{\Psi}_{k}$ is time--varying,  
and showing that the map contracts \emph{over a uniform time--horizon} as opposed to at each time instant. 
{For iterations on the finite dimensional space of covariance matrices, this is the place where the observability and controllability conditions enter the analysis. }
Our hope is that similar conditions apply to more general situations that the one covered by the Kalman filter and that this general approach will 
find novel applications in the analysis of filtering algorithms on 
{general} graphical models.


\section{Conclusion}\label{sec:discussion}

As an attempt to generalize the contraction--based convergence analysis of the Kalman filter, we have interpreted the contraction result of  Bougerol in the space of positive definite (covariance) matrices as a specialization of the 
non--expansiveness of the general filtering recursion for hidden Markov models in the space of  positive measurable functions. 
In spite of the obstacles to showing a finite projective diameter in this infinite dimensional space, we feel that this approach is worth revisiting in the convergence analysis of filtering algorithms 
on 
{general}  
graphical models 
({arbitrary topology} and/or on different spaces of distributions).  
This is the topic of ongoing research.



\bibliographystyle{plain}
\bibliography{biblio_Kalman}

\begin{thebibliography}{10}

\bibitem{AtarZeitouni1997}
R.~Atar and O.~Zeitouni.
\newblock Exponential stability for nonlinear filtering.
\newblock {\em {A}nnales de l'{IHP} {P}robabilit{\'e}s et {S}tatistiques},
  33(6):697--725, 1997.

\bibitem{BaumPetrieSoulesWeiss1970}
L.E. Baum, T.~Petrie, G.~Soules, and N.~Weiss.
\newblock A maximization technique occurring in the statistical analysis of
  probabilistic functions of {M}arkov chains.
\newblock {\em The annals of mathematical statistics}, pages 164--171, 1970.

\bibitem{Birkhoff1957}
G.~Birkhoff.
\newblock Extensions of {J}entzsch's theorem.
\newblock {\em Transactions of the American Mathematical Society}, pages
  219--227, 1957.

\bibitem{Bougerol1993}
P.~Bougerol.
\newblock Kalman filtering with random coefficients and contractions.
\newblock {\em SIAM Journal on Control and Optimization}, 31(4):942--959, 1993.

\bibitem{Bushell1973}
P.J. Bushell.
\newblock {H}ilbert's metric and positive contraction mappings in a {B}anach
  space.
\newblock {\em Archive for Rational Mechanics and Analysis}, 52(4):330--338,
  1973.

\bibitem{CappeMoulinesRyden2005}
O.~Capp{\'e}, E.~Moulines, and T.~Ryd{\'e}n.
\newblock {\em Inference in {H}idden {M}arkov {M}odels}.
\newblock Springer Verlag, New York, 2005.

\bibitem{GaubertQu2014}
S.~Gaubert and Z.~Qu.
\newblock The contraction rate in {T}hompson's part metric of order-preserving
  flows on a cone--application to generalized {R}iccati equations.
\newblock {\em Journal of Differential Equations}, 256(8):2902--2948, 2014.

\bibitem{Hilbert1895}
D.~Hilbert.
\newblock {\"U}ber die gerade linie als k{\"u}rzeste verbindung zweier punkte.
\newblock {\em Mathematische Annalen}, 46(1):91--96, 1895.

\bibitem{Jazwinski1970}
A.~H. Jazwinski.
\newblock {\em Stochastic processes and filtering theory}.
\newblock Academic Press, 1970.

\bibitem{Kalman1963}
R.E. Kalman.
\newblock New methods in {W}iener filtering theory.
\newblock In {\em Proceedings of the First Symposium on Engineering
  Applications of Random Function Theory and Probability}. John Wiley \& Sons,
  New York, 1963.

\bibitem{KalmanBucy1961}
R.E. Kalman and R.~S. Bucy.
\newblock New results in linear filtering and prediction theory.
\newblock {\em Journal of Basic Engineering}, 83(1):95--108, 1961.

\bibitem{KohlbergPratt1982}
E.~Kohlberg and J.W. Pratt.
\newblock The contraction mapping approach to the {P}erron--{F}robenius theory:
  Why {H}ilbert's metric?
\newblock {\em Mathematics of Operations Research}, 7(2):198--210, 1982.

\bibitem{LawsonLim2007}
J.~Lawson and Y.~Lim.
\newblock A {B}irkhoff contraction formula with applications to {R}iccati
  equations.
\newblock {\em SIAM Journal on Control and Optimization}, 46(3):930--951, 2007.

\bibitem{LeGlandMevel2000}
F.~Le~Gland and L.~Mevel.
\newblock Exponential forgetting and geometric ergodicity in hidden markov
  models.
\newblock {\em Mathematics of Control, Signals and Systems}, 13(1):63--93,
  2000.

\bibitem{LeGlandOudjane2004}
F.~Le~Gland and N.~Oudjane.
\newblock Stability and uniform approximation of nonlinear filters using the
  {H}ilbert metric and application to particle filters.
\newblock {\em The Annals of Applied Probability}, 14(1):144--187, 2004.

\bibitem{LiveraniWojtkowski1994}
C.~Liverani and M.P. Wojtkowski.
\newblock Generalization of the {H}ilbert metric to the space of positive
  definite matrices.
\newblock {\em Pacific J. Math}, 166(2):339--355, 1994.

\bibitem{Shiryaev1966}
A.N. Shiryaev.
\newblock On stochastic equations in the theory of conditional {M}arkov
  process.
\newblock {\em Theory of probability and its applications}, 11(1):179--184,
  1966.

\bibitem{Stratonovich1960}
R.L. Stratonovich.
\newblock Conditional {M}arkov processes.
\newblock {\em Theory of Probability and Its Applications}, 5(2):156--178,
  1960.

\bibitem{Wojtkowski2007}
M.P. Wojtkowski.
\newblock Geometry of {K}alman filters.
\newblock {\em J. Geom. Symmetry Phys}, 2007.

\end{thebibliography}

\end{document}